\newtheoremstyle{ptheorem}{1em}{0em}{\itshape}{}{\bfseries}{.}{.5em}{\thmname{#1}\thmnumber{ #2}\thmnote{ (\hspace{-.01pt}{#3})}}
\theoremstyle{ptheorem}
\newtheorem{thm}{thm}[section]
\newtheorem{cor}[thm]{cor}
\newtheoremstyle{hdef}{1em}{0em}{}{}{\bfseries}{.}{.5em}{\thmname{#1}\thmnumber{ #2}\thmnote{ (\hspace{-.01pt}{#3})}}
\theoremstyle{hdef}
\newtheorem{dfn}[thm]{dfn}
\newtheorem{rem}[thm]{rem}
\newtheoremstyle{premark}{1em}{0em}{
\addtolength{\@totalleftmargin}{1.5em}
\addtolength{\linewidth}{-1.5em}
\parshape 1 1.5em \linewidth}{}{\scshape}{.}{.5em}{}
\theoremstyle{premark}
\newtheorem{exa}[thm]{exa}
\numberwithin{equation}{section}
\numberwithin{figure}{section}
\DeclareMathOperator{\Id}{Id}
\DeclareMathOperator{\dif}{d}
\DeclareMathOperator{\PD}{PD}
\newcommand{\cA}{{\mathcal A}}
\newcommand{\cC}{{\mathcal C}}
\newcommand{\cF}{{\mathcal F}}
\newcommand{\cM}{{\mathcal M}}
\newcommand{\bC}{{\mathbb C}}
\newcommand{\bF}{{\mathbb F}}
\newcommand{\bN}{{\mathbb N}}
\newcommand{\bR}{{\mathbb R}}
\renewcommand{\a}{\alpha}
\renewcommand{\b}{\beta}
\renewcommand{\l}{\lambda}
\newcommand{\ol}{\overline}
\renewcommand{\(}{\left(}
\renewcommand{\)}{\right)}
\newcommand{\til}{\tilde}
\newcommand{\Lsp}[1]{\operatorname{L^{#1}}}
\begin{document}
\title{Green's Functions of Partial Differential Equations with Involutions}

\author{
F. Adri\'an F. Tojo\footnote{Partially supported by Xunta de Galicia (Spain), project EM2014/032.} \\
\normalsize
Departamento de An\'alise Ma\-te\-m\'a\-ti\-ca, Facultade de Matem\'aticas,\\ 
\normalsize Universidade de Santiago de Com\-pos\-te\-la, Spain.\\ 
\normalsize e-mail: fernandoadrian.fernandez@usc.es\\
Pedro J. Torres\footnote{Partially supported by Spanish MICINN Grant with FEDER funds MTM2014-52232-P.}\\
\normalsize
Departamento de Ma\-te\-m\'a\-ti\-ca Aplicada, Facultad de Ciencias,\\ 
\normalsize Universidade de Granada, Spain.\\ 
\normalsize e-mail: ptorres@ugr.es\\
}
\date{}

\maketitle

\begin{abstract}
In this paper we develop a way of obtaining Green's functions for partial differential equations with linear involutions by reducing the equation to a higher-order PDE without involutions. The developed theory is applied to a model of heat transfer in a conducting plate which is bent in half.
\end{abstract}

\noindent {\bf Keywords: Green's functions, PDEs, linear involution, heat equation.}

\section{Introduction}
The study of differential equations with involutions dates back to the work of Silberstein \cite{Sil} who, in 1940, obtained the solution of the equation $f(x)=f(1/x)$. In the field of differential equations there has been quite a number of publications (see for instance the monograph on the subject of reducible differential equations of Wiener \cite{Wie2}) but most of them relate to ordinary differential equations (ODEs). There has also been some work in partial differential equations (PDEs), for instance \cite{Wie2} or \cite{Bur}, where they study a PDE with reflection.\par
In what Green's functions for equations with involutions is concerned, we find in \cite{Cab4} the first Green's function for ODEs with reflection and in \cite{CTMal} we have a framework that allows the reduction of any differential equation with reflection and constant coefficients. This setting is established in a general way, so it can be used as well for other operators (the Hilbert transform, for instance) or in other yet unexplored problems, like PDEs \cite{Kir}. In this work we take this last approach and find a way of reducing general linear PDEs with linear involutions to usual PDEs.\par
The paper is structured as follows. In Section 2 we develop an abstract framework, with definitions and adequate notation in order to treat linear PDEs as elements of a vector space consisting of symmetric tensors. This will allow us to systematize the algebraic transformations necessary in order to obtain the desired reduction of the problem. In Section 3 we start providing a simple example that shows how the general process works and then prove the main result of the paper, Theorem \ref{theoremO2}, that permits a general reduction in the case of order two involutions. We end the Section with a problem with an order $3$ involution (Example \ref{o3ex}), illustrating that the same principles could be applied to higher order involutions. Finally, in Section 4, we describe a way to obtain Green's functions for PDEs with linear involutions and apply it to a model of the process of heat transfer in a conducting plate which is bent in half with the two halves separated by some insulating material. We study the problem for different kinds of boundary conditions and a general heat source.
\section{Definitions and notation}
\subsection{Derivatives}
Let $\bF$ be $\bR$ or $\bC$, $n\in\bN$ and $\Omega\subset V:=\bF^n$ a connected open subset. For $p\ge 2$, note by $V^{\odot p}$ the space of symmetric tensors or order $p$, that is, the space of tensors of order $p$ modulus the permutations of their components.  We note $V^{\odot 1}=V$ and $V^{\odot 0}=\bF$. For the convenience of the reader, we summarize now the properties and operations of the symmetric tensors:
\begin{itemize}
\item $V^{\odot p}:=\{v_1^1\odot \cdots\odot v^k_1+\dots +v_r^1\odot \cdots\odot v^k_r\ :\ v_j^s\in\bF^n;\ j=1,\dots,r;\ s=1,\dots,k;\ r,k\in\bN\}$.
\item $(v_1^1\odot \cdots\odot v^k_1)\odot(v_1^{k+1}\odot \cdots\odot v^p_1)=v_1^1\odot \cdots\odot v^p_1;\ v_1^s\in\bF^n;\ s=1,\dots,p;\ p\in\bN$.
\item $v_1\odot v_2=v_2\odot v_1;\ v_1,v_2\in\bF^n$.
\item $\l(v_1\odot v_2)=(\l v_1)\odot v_2;\ v_1,v_2\in\bF^n$.
\item $(v_1+v_2)\odot v_3=v_1\odot v_3+v_2\odot v_3;\ v_1,v_2,v_2\in\bF^n$.
\item $0\odot v_1=0;\ v_1\in\bF^n$.
\end{itemize}
With these properties, $V^{\odot p}$ is an $\bF$-vector space of dimension ${n + p - 1 \choose p}$.\par

 For every $v=(v_1,\dots,v_n)\in V$, we define the directional derivative operator as
\begin{center}\begin{tikzcd}[row sep=tiny]
\cC^1(\Omega,\bF) \arrow{r}{D_v} & \cC(\Omega,\bF)\\
y \arrow[mapsto]{r} & v_1\frac{\partial y}{\partial x^1}+\cdots+v_n\frac{\partial y}{\partial x^n}
\end{tikzcd}
\end{center}
If $\nabla y$ denotes the gradient vector of $y$, then $D_v(y)=v^T\nabla y$. Observe that $D_{\l u+v}=\l D_u+D_v$ for every $u,v\in\bF^n$ and $\l\in\bF$, that is, $D_v$ is linear in $v$. Also, for $u,v\in\bF^n$, if $y\in\cC^2(\Omega,\bF)$, then $D_u(D_v y)=D_v(D_u y)$. Furthermore, $D_u\circ D_v$ is bilinear --that is, linear in both $u$ and $v$, so we can write the identification $D_v\circ D_u\equiv D^2_{v\odot u}$, where $v\odot u$ denotes de symmetric tensor product of $u$ and $v$. In the same way, we define the composition of higher order derivatives by $D^p_{\omega_2}\circ D^q_{\omega_1}=D^{p+q}_{\omega_2\odot \omega_1}$ where $\omega_1\in V^{\odot q}$ and $\omega_2\in V^{\odot p}$, $p,q\in\bN$.\par

In this way, a linear partial differential equation is given by
\begin{equation}Ly:=\sum_{k=0}^mD^k_{\omega_k}y=0,\label{lineq}\end{equation}
where $\omega_k\in V^{\odot k}$ for $k=1,\dots,m$ and $D^0_{\omega_0}u\equiv\omega_0 u$ where $\omega_0\in\bF$ (that is, $V^{\odot 0}:=\bF$). Now, the operator $L$ can be identified with $\omega_0+ \omega_1+\cdots+ \omega_n$, which is an element of the  symmetric tensor algebra
\[ S^*V:=\bigoplus_{k=0}^{\infty}V^{\odot n} =\mathbb{F} \oplus V \oplus \left(V\odot V\right) \oplus \left(V\odot V\odot V\right) \oplus \cdots\]
It is interesting to point out the the Hilbert space completion of $S^*V$, that is, $F_+(V):=\ol{S^*V}$, is called the \emph{symmetric} or \emph{bosonic Fock space}, which is widely used in quantum mechanics \cite{Fock}.
\subsection{Involutions}
\begin{dfn} Let $\Omega$ be a set and $A:\Omega\to \Omega$, $p\in\bN$, $p\ge2$. We say that $A$  is an \emph{order} $p$ \emph{involution}  if
\begin{enumerate}
\item $A^p\equiv A\circ\stackrel{\stackrel{p}{\smile}}{\cdots}\circ A=\Id$,
\item $A^j\ne\Id,\  j=1,\dots,p-1$.
\end{enumerate}
\end{dfn}
We will consider linear involutions in $\bF^n$. They are characterized by the following theorem.
\begin{thm}[\protect{\cite{Amir}}]\label{theoremc}A necessary and sufficient condition for a linear transformation $A$ on a finite dimensional complex vector space $V$ to be an involution of order $p$ is that $A=\a_1P_1+\cdots+\a_kP_k$ where $\a_j$ is a $p$-th root of the unity, and $P_1,\dots,P_k$ are projections such that $P_jP_l=0$, $i\ne j$ and $P_1+\cdots + P_k=\Id$.
\end{thm}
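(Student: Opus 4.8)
The plan is to read Theorem \ref{theoremc} as a spectral statement: over $\bC$ a linear operator $A$ satisfies $A^p=\Id$ precisely when it is diagonalizable with every eigenvalue a $p$-th root of unity, and the $P_j$ appearing in the statement are nothing but the spectral projections onto its eigenspaces. I would prove the two implications separately.

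For sufficiency, suppose $A=\a_1P_1+\cdots+\a_kP_k$ with the $P_j$ projections satisfying $P_jP_l=0$ for $j\ne l$ and $P_1+\cdots+P_k=\Id$, each $\a_j$ a $p$-th root of unity. From $P_jP_l=\d_{jl}P_j$ one obtains, by a one-line induction on $m$, that $A^m=\a_1^mP_1+\cdots+\a_k^mP_k$ for all $m\in\bN$; taking $m=p$ and using $\a_j^p=1$ together with $\sum_jP_j=\Id$ gives $A^p=\Id$. The requirement $A^m\ne\Id$ for $1\le m\le p-1$ then amounts to the arithmetic condition that no exponent smaller than $p$ annihilates all the $\a_j$ with $P_j\ne0$, i.e.\ that the least common multiple of the orders of those $\a_j$ be exactly $p$; I would state this explicitly, since it is what is really meant by ``$\a_j$ is a $p$-th root of unity'' in the context of an order-$p$ involution.

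For necessity, assume $A^p=\Id$ and $A^m\ne\Id$ for $1\le m\le p-1$. Then the minimal polynomial of $A$ divides $x^p-1=\prod_{j=0}^{p-1}(x-\zeta^j)$ with $\zeta=e^{2\pi i/p}$, which over $\bC$ is a product of $p$ distinct linear factors; hence the minimal polynomial of $A$ is squarefree and $A$ is diagonalizable. Let $\a_1,\dots,\a_k$ be its distinct eigenvalues --- each necessarily a $p$-th root of unity --- and decompose $V=\bigoplus_{j=1}^k\ker(A-\a_j\Id)$. Let $P_j$ be the projection onto $\ker(A-\a_j\Id)$ with kernel the sum of the remaining eigenspaces; equivalently, by Lagrange interpolation, $P_j=\prod_{l\ne j}(A-\a_l\Id)/(\a_j-\a_l)$. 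These satisfy $P_jP_l=0$ for $j\ne l$, $P_1+\cdots+P_k=\Id$ and $A=\a_1P_1+\cdots+\a_kP_k$, and the hypothesis $A^m\ne\Id$ for $m<p$ forces the least common multiple of the orders of the $\a_j$ to equal $p$.

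The inductive computation in the sufficiency half and the standard facts about diagonalizability and spectral projections in the necessity half are routine. I expect the only point genuinely requiring care to be the bookkeeping for the \emph{exact} order $p$: both directions hinge on translating the clause ``$A^m\ne\Id$ for $m<p$'' into the statement that the orders of the eigenvalues have least common multiple $p$, and I would make this condition explicit rather than leaving it buried in the phrase ``$p$-th root of unity''.
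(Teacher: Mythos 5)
Your proof is correct, but note that the paper itself does not prove this statement at all: it is quoted verbatim from the reference [Amir] (Amir-Mo\'ez and Palmer) and used as a black box, so there is no internal proof to compare against. Your argument is the standard spectral one and it works: for sufficiency, the relations $P_jP_l=\delta_{jl}P_j$ (idempotency in fact follows from $P_jP_l=0$, $j\ne l$, and $\sum_j P_j=\Id$) give $A^m=\sum_j\alpha_j^mP_j$ and hence $A^p=\Id$; for necessity, $A^p=\Id$ forces the minimal polynomial to divide $x^p-1$, which is squarefree over $\bC$, so $A$ is diagonalizable with eigenvalues that are $p$-th roots of unity, and the Lagrange-interpolation formula $P_j=\prod_{l\ne j}(A-\alpha_l\Id)/(\alpha_j-\alpha_l)$ produces the required spectral projections. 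This is also exactly the diagonalization $A=U^{-1}\Lambda U$ that the paper's subsequent corollary extracts from the theorem, so your viewpoint is consistent with how the result is used. Your one genuine addition is the observation that the statement as printed only characterizes $A^p=\Id$: to get an involution of \emph{exact} order $p$ one must additionally require that the least common multiple of the orders of the $\alpha_j$ occurring with $P_j\ne0$ equals $p$ (otherwise some $A^m=\Id$ with $m<p$). Making that condition explicit is a worthwhile sharpening of the cited statement rather than a deviation from the paper, which silently absorbs it into the phrase ``$p$-th root of the unity''.
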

\begin{rem} As an straightforward consequence of this result we have that there are only order two linear involutions in $\bR^n$. This is because the only real $p$-th roots of the unity are contained in $\{\pm 1\}$.
\end{rem}
The characterization provided in Theorem  \ref{theoremc} can be rewritten in the following way.
\begin{cor}A necessary and sufficient condition for a linear transformation $A$ on $V$ to be an involution of order $p$ is that $A=U^{-1}\Lambda U$ where $\Lambda,U\in\cM_n(\bF)$, $U$ is invertible and $\Lambda$ is a diagonal matrix where the elements of the diagonal are $p$-th roots of the unity.
\end{cor}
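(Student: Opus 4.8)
The plan is to obtain the statement as a direct reformulation of Theorem \ref{theoremc}. For the \emph{only if} implication, suppose $A$ is an order $p$ involution and write $A=\alpha_1P_1+\cdots+\alpha_kP_k$ as provided by that theorem, where each $\alpha_j$ is a $p$-th root of unity and the $P_j$ are idempotents with $P_jP_l=0$ for $j\neq l$ and $P_1+\cdots+P_k=\Id$. First I would note that these relations force the decomposition $V=\im P_1\oplus\cdots\oplus\im P_k$: writing $v=\sum_jP_jv$ shows the images span $V$, while applying $P_l$ to a relation $\sum_jP_ju_j=0$ and using $P_l^2=P_l$, $P_lP_j=0$ gives $P_lu_l=0$ for each $l$, hence independence. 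Then for $w=P_ju\in\im P_j$ one computes $Aw=\sum_l\alpha_lP_lP_ju=\alpha_jP_j^2u=\alpha_jw$, so $A$ restricted to $\im P_j$ is multiplication by $\alpha_j$. Picking a basis of $V$ adapted to this decomposition and letting $U$ be the associated change-of-basis matrix yields $A=U^{-1}\Lambda U$ with $\Lambda=\diag(\alpha_1,\dots,\alpha_1,\dots,\alpha_k,\dots,\alpha_k)$ (each $\alpha_j$ repeated $\dim\im P_j$ times), a diagonal matrix whose entries are $p$-th roots of unity.

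For the \emph{if} implication, let $A=U^{-1}\Lambda U$ with $\Lambda$ diagonal and all diagonal entries $p$-th roots of unity. Since $A^j=U^{-1}\Lambda^jU$, we get $A^p=U^{-1}\Lambda^pU=\Id$ at once, and more generally $A^j=\Id$ precisely when $\Lambda^j=\Id$, i.e.\ when every diagonal entry of $\Lambda$ is a $j$-th root of unity. Hence $A$ is an involution of order exactly $p$ iff $p$ is the least common multiple of the orders of the diagonal entries of $\Lambda$. I would record this as a caveat: the phrase ``the diagonal entries are $p$-th roots of unity'' has to be read as ``they generate the group of $p$-th roots of unity'' (equivalently, at least one of them is primitive), matching the implicit condition on the $\alpha_j$ in Theorem \ref{theoremc}; without it $\Lambda=\Id$ would spuriously qualify. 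Since this condition is manifestly invariant under conjugation by $U$, the two formulations are interchangeable.

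The argument is essentially bookkeeping once Theorem \ref{theoremc} is granted, and I do not foresee a genuine obstacle. (Alternatively, one could bypass that theorem: $A^p=\Id$ forces the minimal polynomial of $A$ to divide $x^p-1$, which over $\bF$ splits into distinct linear factors, so $A$ is diagonalizable with eigenvalues among the $p$-th roots of unity — this is just the spectral-projection content of Theorem \ref{theoremc}.) The only delicate point, as noted, is phrasing the ``order exactly $p$'' requirement so that it transports correctly through the conjugation.
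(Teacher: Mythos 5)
Your proof is correct and follows essentially the same route as the paper: the necessity direction is exactly the paper's argument (use Theorem~\ref{theoremc} to decompose $V=\im P_1\oplus\cdots\oplus\im P_k$ and take $U^{-1}$ with columns forming bases of these subspaces, so that $A=U^{-1}\Lambda U$ with $\Lambda$ diagonal), only written out in more detail. You actually go further than the paper, whose proof treats only this direction; your verification of sufficiency and your caveat that ``diagonal entries are $p$-th roots of unity'' must be read so as to guarantee $A^j\neq\Id$ for $j=1,\dots,p-1$ (e.g.\ excluding $\Lambda=\Id$) addresses a point the paper's statement and proof leave implicit.
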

\begin{proof} Consider the characterization of involutions given by Theorem \ref{theoremc}. Take the vector subspaces $H_j:=P_jV$, $j=1,\dots,k$. Then, $V=H_1\oplus\dots\oplus H_k$. Take $U^{-1}$ to be the matrix of which its columns are, consecutively, a basis of $H_k$. Hence, $A=U^{-1}\Lambda U$ where $\Lambda$ is a diagonal matrix of diagonal \[(\a_1,\dots,\a_1,\a_2,\dots,\a_2,\dots,\a_k,\dots,\a_k),\] where every $\a_j$ is repeated accorollaryding to the dimension of $H_k$.
\end{proof}
\subsection{Pullbacks and equations}
Let $\cF(\Omega,\bF)$ be the set of functions from $\Omega\subset\bF^n$ to $\bF$. We define the pullback operator by a function $\varphi\in\cF(\Omega,\Omega)$ as
\begin{center}\begin{tikzcd}[row sep=tiny]
\cF^1(\Omega,\bF) \arrow{r}{\varphi^*} & \cF(\Omega,\bF)\\
y \arrow[mapsto]{r} &y\circ\varphi
\end{tikzcd}
\end{center}
Assume $A$ is a linear  order $p$ involution on $\Omega$ ($\Omega$ has to be such that $\Omega=A(\Omega)$). From now on, we will omit the composition signs. Observe that, for $v\in V$, $x\in\Omega$ and $y\in\cC^1(\Omega,\bF)$,
\begin{align*}((D_vA^*)y)(x) & =D_v(y(Ax))=v^T\nabla( y(Ax))=v^TA^T\nabla y(Ax) \\ & =(Av)^T\nabla y(Ax)=D_{Av}\nabla y(Ax)=(A^*D_{Av})y(x),\end{align*}
or, written briefly, $D_vA^*=A^*D_{Av}$. All the same, for $v_1,\dots,v_j\in V$,
\[D^j_{v_1\odot\dots\odot v_j}A^*=A^*D^j_{Av_1\odot\dots\odot Av_j}.\]
If $\omega_k=v_1\odot\dots\odot v_k\in V^{\odot k}$, we denote $A\omega_k\equiv Av_1\odot\dots\odot Av_j$. This way, $D^j_{\omega_k}A^*=A^*D^j_{A\omega_k}$.\par
We can consider now linear partial differential equations with linear involutions of the form
\[Ly:=\sum_{j=0}^{p-1}\sum_{k=0}^m(A^*)^jD^k_{\omega^j_k}y=0,\]
where $\omega^j_k\in V^{\odot k}$ for $k=0,\dots,m$; $j=0,\dots,p-1$. This time we can identify $L$ with
\[\left(\omega_1^0+\cdots+\omega_m^0,\,\omega_1^1+\cdots+\omega_m^1,\,\dots,\,\omega_1^{p-1}+\cdots+\omega_m^{p-1}\right)\in(S^*V)^p.\]
The interest in these equations appears when they can be reduced to usual partial differential equations.
\begin{dfn}[\cite{CTMal}]
If $\bF[D]$ is the ring of polynomials on the usual differential operator $D$ and $\cA$ is any operator algebra containing $\bF[D]$, then an equation $L x=0$, where $L\in\cA$, is said to be a \emph{reducible differential equation}\index{reducible differential equation} if there exits $R\in\cA$ such that $RL\in\bF[D]$.
\end{dfn}

In our present case, the first projection of the algebra $(S^*V)^p$ is precisely the algebra of partial differential operators on $n$ variables $\PD_n[\bF]$, so we want to find elements $R\in (S^*V)^p$ such that they nullify the last $p-1$ components of $L$.
\section{Reducing the operators}
We start with an illustrative example.
\begin{exa}\label{simplex}
Let $V=\bR^2$, $v=(v_1,v_2)\in V$ and
\[A=\begin{pmatrix}1 & 0  \\ 0 & -1\end{pmatrix}.\]
$A$ is an order $2$ involution. Consider the equation
\begin{equation}\label{1ex1eq}v_1\frac{\partial y}{\partial x_1}(x)+v_2\frac{\partial y}{\partial x_2}(x)+ y(Ax)=0,\ x=(x_1,x_2)\in\bR^2.\end{equation}
Here we work with the operator $L=D_v+A^*$. Take then $R=D_{-Av}+A^*$ and consider the identity operator $\Id$. We have that
\begin{align*} RL = & (D_{-Av}+A^*)(D_v+A^*)=D_{-Av}D_v+A^*D_v+D_{-Av}A^*+(A^*)^2 \\ = & D_{-Av\odot v}+A^*D_v+A^*D_{-AAv}+\Id =  D_{-Av\odot v}+A^*D_{v}+A^*D_{-v}+\Id \\ = & D_{-Av\odot v}+A^*D_v-A^*D_{v}+\Id = D_{-Av\odot v}+\Id.\end{align*}
Hence, every two-times differentiable solution of equation \eqref{1ex1eq} has to be a solution of the partial differential equation
\begin{equation*}-v_1^2\frac{\partial^2 y}{\partial x_1^2}(x)+v_2^2\frac{\partial^2 y}{\partial x_2^2}(x)+y=0,\ x=(x_1,x_2)\in\bR^2.\end{equation*}
\end{exa}
\begin{rem} With the notation we have introduced, it is extremely important the use of parentheses. Observe that every $\omega\in (\bF^n)^{\odot k}$ can be expressed as $\omega=v_1^1\odot \cdots\odot v^k_1+\dots +v_r^1\odot \cdots\odot v^k_r$ for some $v_j^s\in\bF^n$, $j=1,\dots,r$, $s=1,\dots,k$; $r,k\in\bN$. Hence, for $c\in\bF$,
\begin{align*}(cA)\omega= & cAv_1^1\odot \cdots\odot cA v^k_1+\dots +cAv_r^1\odot \cdots\odot cAv^k_r \\= &c^k(Av_1^1\odot \cdots\odot Av^k_1)+\dots +c^k(Av_r^1\odot \cdots\odot Av^k_r)=c^k(A\omega)\equiv c^kA\omega.\end{align*}
\end{rem}
\begin{thm}\label{theoremO2}
	Let $A$ be an order $2$ linear involution on $\bF^n$. Let $L\in(S^*V)^p$ be defined as in \eqref{lineq}. Then there exists $R\in(S^*V)^p$ defined as
	\[Ry:=\sum_{j=0}^{p-1}\sum_{k=0}^m(A^*)^jD^k_{\xi^j_k}y=0,\]
	where $\xi^0_k=-A\omega^0_k$, $\xi^1_k=\omega^1_k$, for $k=0,1,\dots$, such that $RL\in \PD_n[\bF]$. Furthermore, $L$ and $R$ commute.
\end{thm}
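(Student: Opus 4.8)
The plan is to split $L$ into its $A^*$-free part and its $A^*$-part and then push every $A^*$ to the left using the commutation rule $D^k_\omega A^*=A^*D^k_{A\omega}$ established above. Since $A$ is an order-$2$ involution we have $A^2=\Id$ on $V$ (hence on every $V^{\odot k}$) and $(A^*)^2=\Id$ on $\cF(\Omega,\bF)$, so I would write $L=P+A^*Q$ with $P:=\sum_{k=0}^m D^k_{\omega^0_k}$ and $Q:=\sum_{k=0}^m D^k_{\omega^1_k}$, both elements of the ordinary partial differential operator algebra $\PD_n[\bF]$. For any $T=\sum_k D^k_{\eta_k}\in\PD_n[\bF]$ set $\widehat T:=\sum_k D^k_{A\eta_k}$; linearity of $D^k$ in its tensor slot upgrades the commutation rule to $TA^*=A^*\widehat T$, while $A^2=\Id$ gives $\widehat{\widehat T}=T$. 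In this notation the operator in the statement is precisely $R=-\widehat P+A^*Q$.

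Next I would compute $RL$ by expanding and applying the three identities $TA^*=A^*\widehat T$, $\widehat{\widehat T}=T$ and $(A^*)^2=\Id$:
\[
RL=(-\widehat P+A^*Q)(P+A^*Q)=-\widehat P\,P-A^*P\,Q+A^*Q\,P+\widehat Q\,Q=-\widehat P\,P+\widehat Q\,Q+A^*(Q\,P-P\,Q).
\]
The decisive point is that $\PD_n[\bF]$ is a commutative ring: the operators $D_v$ commute pairwise ($D_uD_vy=D_vD_uy$) and the coefficients are constant, so $PQ=QP$ and the $A^*$-term vanishes. Hence $RL=-\widehat P\,P+\widehat Q\,Q\in\PD_n[\bF]$, which is the reducibility claim and at the same time exhibits the reduced equation, just as in Example \ref{simplex}.

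Then I would run the same computation for $LR$:
\[
LR=(P+A^*Q)(-\widehat P+A^*Q)=-P\,\widehat P+A^*\widehat P\,Q-A^*Q\,\widehat P+\widehat Q\,Q=-\widehat P\,P+\widehat Q\,Q+A^*(\widehat P\,Q-Q\,\widehat P),
\]
using $P\widehat P=\widehat P P$ in the last step. Commutativity of $\PD_n[\bF]$ again annihilates the $A^*$-term ($\widehat P Q=Q\widehat P$), so $LR=-\widehat P\,P+\widehat Q\,Q=RL$ and $L$ and $R$ commute.

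I do not expect a real obstacle here: the argument is bookkeeping resting on the single conceptual fact that the target algebra $\PD_n[\bF]$ is commutative, which is exactly what makes the cross terms cancel. The points to watch are the signs coming from $\xi^0_k=-A\omega^0_k$, the identity $\widehat{\widehat P}=P$ — this is where order $2$, i.e.\ $A^2=\Id$, enters essentially, which is why the recipe must be modified for higher-order involutions, cf.\ Example \ref{o3ex} — and the standing hypothesis $\Omega=A(\Omega)$ under which $A^*$ is defined.
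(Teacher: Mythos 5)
Your argument is correct and is in substance the same as the paper's: both proofs push $A^*$ to the left with $D^k_\omega A^*=A^*D^k_{A\omega}$, use $A^2=\Id$, $(A^*)^2=\Id$, and the commutativity of constant-coefficient operators (which in the paper appears degree by degree as the symmetry of the sums $\sum_k\omega^1_{s-k}\odot\omega^0_k=\sum_k\omega^0_k\odot\omega^1_{s-k}$) to kill the $A^*$-component of $RL$ and of $LR$. The only difference is presentational: you package the computation at the operator level via $L=P+A^*Q$, $R=-\widehat P+A^*Q$ and the hat map, whereas the paper carries out the equivalent coefficient-level expansion in $(S^*V)^2$; your identity $RL=-\widehat P\,P+\widehat Q\,Q$ indeed reproduces the paper's examples (e.g.\ $R=-\a\Delta+\b A^*+\b\Id$ and $RL=-\a^2\Delta^2+2\a\b\Delta$).
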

\begin{proof} For convenience, define $\xi^j_k$ and $\omega^j_k$ outside the index range $j=0,\dots,p-1$, $k=0,\dots m$ to be zero. In general,
	\begin{align*}
		RL= & \sum_{l=0}^{p-1}\sum_{r=0}^m(A^*)^lD^r_{\xi^l_r}\left(\sum_{j=0}^{p-1}\sum_{k=0}^m(A^*)^jD^k_{\omega^j_k}\right)=\sum_{l,j=0}^{p-1}\sum_{r,k=0}^m(A^*)^lD^r_{\xi^l_r}(A^*)^jD^k_{\omega^j_k} \\ = & \sum_{l,j=0}^{p-1}\sum_{r,k=0}^m(A^*)^{l+j}D^r_{A^j\xi^l_r}D^k_{\omega^j_k}
		= \sum_{l,j=0}^{p-1}\sum_{r,k=0}^m(A^*)^{l+j}D^{r+k}_{A^j\xi^l_r\odot\omega^j_k}\\= &  \sum_{l,j=0}^{p-1}(A^*)^{l+j}\(\sum_{s=0}^{2m}\sum_{k=0}^sD^{s}_{A^j\xi^l_{s-k}\odot\omega^j_k}\)=\sum_{l,j=0}^{p-1}(A^*)^{l+j}\(\sum_{s=0}^{2m}D^{s}_{\sum_{k=0}^sA^j\xi^l_{s-k}\odot\omega^j_k}\).
	\end{align*}
In the particular case $p=2$, we have that
\begin{align*}
	RL= & \sum_{s=0}^{2m}D^{s}_{\sum_{k=0}^s\xi^0_{s-k}\odot\omega^0_k}+ \sum_{s=0}^{2m}D^{s}_{\sum_{k=0}^sA\xi^1_{s-k}\odot\omega^1_k} \\ & + A^*\(\sum_{s=0}^{2m}D^{s}_{\sum_{k=0}^s\xi^1_{s-k}\odot\omega^0_k}+ \sum_{s=0}^{2m}D^{s}_{\sum_{k=0}^sA\xi^0_{s-k}\odot\omega^1_k}\)\\
	= & \sum_{s=0}^{2m}D^{s}_{\sum_{k=0}^s\(\xi^0_{s-k}\odot\omega^0_k+A\xi^1_{s-k}\odot\omega^1_k\)}+ A^*\(\sum_{s=0}^{2m}D^{s}_{\sum_{k=0}^s\(\xi^1_{s-k}\odot\omega^0_k+A\xi^0_{s-k}\odot\omega^1_k\)}\).
\end{align*}
So it is enough to check that, for $s=0,\dots,2m$,
\[\sum_{k=0}^s\(\xi^1_{s-k}\odot\omega^0_k+A\xi^0_{s-k}\odot\omega^1_k\)=0.\]
Substituting the $\xi_k^j$ by their given values,
\begin{align*}& \sum_{k=0}^s\(\xi^1_{s-k}\odot\omega^0_k+A\xi^0_{s-k}\odot\omega^1_k\) =   \sum_{k=0}^s\(\omega^1_{s-k}\odot\omega^0_k-A^2\omega^0_{s-k}\odot\omega^1_k\) \\ = & \sum_{k=0}^s\(\omega^1_{s-k}\odot\omega^0_k-\omega^0_{s-k}\odot\omega^1_k\) = \sum_{k=0}^s\omega^1_{s-k}\odot\omega^0_k-\sum_{k=0}^s\omega^0_{s-k}\odot\omega^1_k \\ = & \sum_{k=0}^s\omega^1_{s-k}\odot\omega^0_k-\sum_{k=0}^s\omega^0_{k}\odot\omega^1_{s-k}=0.
\end{align*}
\par
Let us see that $L$ and $R$ commute.
\[
LR=  \sum_{s=0}^{2m}D^{s}_{\sum_{k=0}^s\(\omega^0_{s-k}\odot\xi^0_k+A\omega^1_{s-k}\odot\xi^1_k\)}+ A^*\(\sum_{s=0}^{2m}D^{s}_{\sum_{k=0}^s\(\omega^1_{s-k}\odot\xi^0_k+A\omega^0_{s-k}\odot\xi^1_k\)}\).\]
Now,
\begin{align*}  & \sum_{k=0}^s\(\omega^0_{s-k}\odot\xi^0_k+A\omega^1_{s-k}\odot\xi^1_k\)=\sum_{k=0}^s\omega^0_{k}\odot\xi^0_{s-k}+\sum_{k=0}^sA\omega^1_{s-k}\odot\omega^1_k \\ = &  \sum_{k=0}^s\xi^0_{s-k}\odot\omega^0_{k}+\sum_{k=0}^sA\xi^1_{s-k}\odot\omega^1_k.\end{align*}
On the other hand,
\begin{align*} &  \sum_{k=0}^s\(\omega^1_{s-k}\odot\xi^0_k+A\omega^0_{s-k}\odot\xi^1_k\)=\sum_{k=0}^s\(\omega^1_{s-k}\odot(-A\omega^0_k)+A\omega^0_{s-k}\odot\omega^1_k\)  \\= &\sum_{k=0}^s\(-\omega^1_{s-k}\odot A\omega^0_k+A\omega^0_{s-k}\odot\omega^1_k\)=\sum_{k=0}^s\(-\omega^1_{k}\odot A\omega^0_{s-k}+A\omega^0_{s-k}\odot\omega^1_k\)=0.\end{align*}
Hence, the result is proven.
\end{proof}
Similar reductions can be found for higher order involutions, although the coefficients may have a much more complex expression.
\begin{exa}\label{o3ex} Let $A$ be and order $3$ linear involution in $\bC^n$, $v\in\bC^n\backslash\{0\}$ and consider the operator $L=D_v+A^*$. Define now 
	\[R:=D_{v\odot A^2v}-A^*D_{A^2v}+(A^*)^2.\]
Observe that second derivatives occur in $R$ but not in $L$. We have that
\begin{align*}RL= & D_{v\odot A^2v}D_v-A^*D_{A^2v}D_v+(A^*)^2D_v+D_{v\odot A^2v}A^*-A^*D_{A^2v}A^*+(A^*)^2A^*\\ = & D_{v\odot v\odot A^2v}-A^*D_{v\odot A^2v}+(A^*)^2D_v+A^*D_{v\odot A^2v}-(A^*)^2D_{v}+\Id \\ = & D_{v\odot v\odot A^2v}+\Id.\end{align*}
Unfortunately, we do not have commutativity in general:
\begin{align*}LR= & D_vD_{v\odot A^2v}-D_vA^*D_{A^2v}+D_v(A^*)^2+A^*D_{v\odot A^2v}-(A^*)^2D_{A^2v}+\Id\\ = &
D_{v\odot v\odot A^2v}-A^*D_{A^*v\odot A^2v}+(A^*)^2D_{A^2v}+A^*D_{v\odot A^2v}-(A^*)^2D_{A^2v}+\Id \\ = & D_{v\odot v\odot A^2v}+A^*D_{(v-A^*v)\odot A^2v}+\Id .\end{align*}
 In the particular case $v$ is a fixed point of $A$, $RL=LR$.\par The obtaining of a general expression for associated operators in the case of order 3 involutions and the conditions under which such operators commute is an interesting open problem.
\end{exa}
\section{Green's functions}
Consider now the following problem
\begin{equation}\label{prineq}
Lu=h;\  B_\l u=0,\ \l\in\Lambda,
\end{equation}
where $L\in(S^*V)^p$, $h\in \Lsp{1}(\bF^n,\bF)$, the $B_\l:\cC(\bF^n,\bF)\to\bF$ are linear functionals, $\l\in\Lambda$ and $\Lambda$ is an arbitrary set.

Let $R\in(S^*V)^p$, $f\in \Lsp{1}(\bF^n,\bF)$ and consider the problem
\begin{equation}\label{prineq2}
RLv=f;\  B_\l v=0,\  B_\l Rv=0,\ \l\in\Lambda.
\end{equation}
Given a function $G:\bF^n\times\bF^n\to\bF$, we define the operator $H_G$ such that $H_G(h)|_x:=\int_{\bF^n} G(x,s)h(s)\dif s$ for every $h\in\Lsp{1}(\bF^n,\bF)$, assuming such an integral is well defined. Also, given an operator $R$ for functions of one variable, define the operator $R_\vdash$ as $R_\vdash G(t,s):=R(G(\cdot,s))|_{t}$ for every $s$, that is, the operator acts on $G$ as a function of its first variable. \par
We have now the following theorem relating problems \eqref{prineq} and \eqref{prineq2}. The proof for the case of ordinary differential equations can be found in \cite{CTMal}. The case of PDEs is analogous.
\begin{thm}\label{theoremmostgen}
	Let $L,\,R\in(S^*V)^p$, $h\in\Lsp{1}(\bF^n,\bF)$. Assume $L$ commutes with $R$ and that there exists $G$ such that $H_G$ is well defined satisfying\par
	$\begin{array}{rl}
	(I) & (RL)_\vdash G=0,\\
	(II) & B_{\l\,\vdash} G=0,\ \l\in\Lambda,\\
	(III) & ( B_\l R)_\vdash G=0,\ \l\in\Lambda,\\
	(IV) & RLH_Gh=H_{(RL)_\vdash G}h+h,\\
	(V) & LH_{R_\vdash G}h=H_{L_\vdash R_\vdash G}h+h,\\
	(VI) &  B_\l H_G= H_{B_{\l\,\vdash} G},\ \l\in\Lambda,\\
	(VII) &  B_\l RH_G= B_\l H_{R_\vdash G}=H_{( B_\l R)_\vdash G},\ \l\in\Lambda.
	\end{array}$\par
	Then, $v:=H_Gf$ is a solution of problem \eqref{prineq2} and $u:=H_{R_\vdash G}h$ is a solution of problem \eqref{prineq}.
\end{thm}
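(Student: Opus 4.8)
The plan is to verify the seven conditions $(I)$–$(VII)$ directly, treating $G$ as a function of two groups of variables and carefully tracking which variable each operator acts on. The structure of the argument mirrors the ODE case in \cite{CTMal}: the role of $R$ is to convert the involution equation $Lu=h$ into the genuine PDE $RLv=f$, and the role of $G$ is to be a Green's function for the \emph{reduced} operator $RL\in\PD_n[\bF]$ together with the enlarged boundary data. Since Theorem \ref{theoremO2} guarantees $RL\in\PD_n[\bF]$ and that $L$ and $R$ commute, these hypotheses are consistent with the setup and are exactly what makes the computation close.

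First I would prove that $v:=H_Gf$ solves \eqref{prineq2}. Applying $RL$ to $v$ and using $(IV)$ with $h$ replaced by $f$ gives $RLv = H_{(RL)_\vdash G}f + f$, and then $(I)$ makes the first term vanish, so $RLv=f$. For the boundary conditions, $(VI)$ with $h=f$ yields $B_\l v = B_\l H_G f = H_{B_{\l\,\vdash}G}f$, which is zero by $(II)$; similarly $(VII)$ with $h=f$ gives $B_\l Rv = B_\l R H_G f = H_{(B_\l R)_\vdash G}f$, which vanishes by $(III)$. This half is essentially bookkeeping once the identities $(I)$–$(VII)$ are granted.

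Next I would prove that $u:=H_{R_\vdash G}h$ solves \eqref{prineq}. Applying $L$ and invoking $(V)$ gives $Lu = L H_{R_\vdash G}h = H_{L_\vdash R_\vdash G}h + h$. Here is where commutativity of $L$ and $R$ enters: $L_\vdash R_\vdash G = (LR)_\vdash G = (RL)_\vdash G = 0$ by $(I)$, so $Lu=h$. For the boundary conditions, $(VII)$ reads $B_\l H_{R_\vdash G} = H_{(B_\l R)_\vdash G}$, so $B_\l u = H_{(B_\l R)_\vdash G}h = 0$ by $(III)$. Thus $u$ satisfies \eqref{prineq}.

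The main obstacle, and the place where real care is needed rather than formal manipulation, is justifying the ``differentiation under the integral sign'' identities $(IV)$, $(V)$, $(VI)$, $(VII)$ — that is, checking that the operators $RL$, $L$, $B_\l$, and $B_\l R$ actually commute with the integral operator $H_G$ up to the stated boundary/distributional correction terms. Strictly speaking these are hypotheses of the theorem (they are listed among $(I)$–$(VII)$), so the proof proper only has to assemble them; but to make the statement usable one must know they hold in the relevant function spaces, which requires enough regularity of $G$ and suitable decay so that the integrals $H_G h$, $H_{R_\vdash G}h$, etc., are well defined and the order of $D^k_{\omega}$ (and of the possibly nonlocal pullback $A^*$) and $\int_{\bF^n}$ can be exchanged. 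Since \cite{CTMal} establishes exactly this in the one-variable setting and the passage to several variables changes nothing essential in the argument — the pullback $A^*$ is still a bounded composition operator and the derivatives still pass under the integral wherever $G(\cdot,s)$ is smooth enough — I would simply remark that the PDE case is analogous and refer the reader there for the detailed verification, then present the short deduction of \eqref{prineq} and \eqref{prineq2} from $(I)$–$(VII)$ as above.
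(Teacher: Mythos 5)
Your proposal is correct and follows essentially the same route as the paper, which itself gives no written proof here but defers to the ODE case in \cite{CTMal} as ``analogous'': the intended argument is precisely your direct assembly of hypotheses $(I)$--$(VII)$ (apply $(IV)$ and $(I)$ for $RLv=f$, $(VI)$--$(VII)$ with $(II)$--$(III)$ for the boundary terms, and $(V)$ together with commutativity, $L_\vdash R_\vdash G=(RL)_\vdash G=0$, for $Lu=h$). Your closing remark that $(IV)$--$(VII)$ are hypotheses whose verification in concrete settings requires regularity of $G$ and differentiation under the integral sign is also consistent with how the paper treats them.
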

\subsection{A model of stationary heat transfer in a bent plate}
We now consider a circular plate which is bent in half, with each of the two distinct halves separated by a very small distance which may be filled with some kind of (imperfect) heat insulating material (see Figure \ref{figplate}).\par

\begin{figure}\begin{center}\includegraphics[width=.5\textwidth]{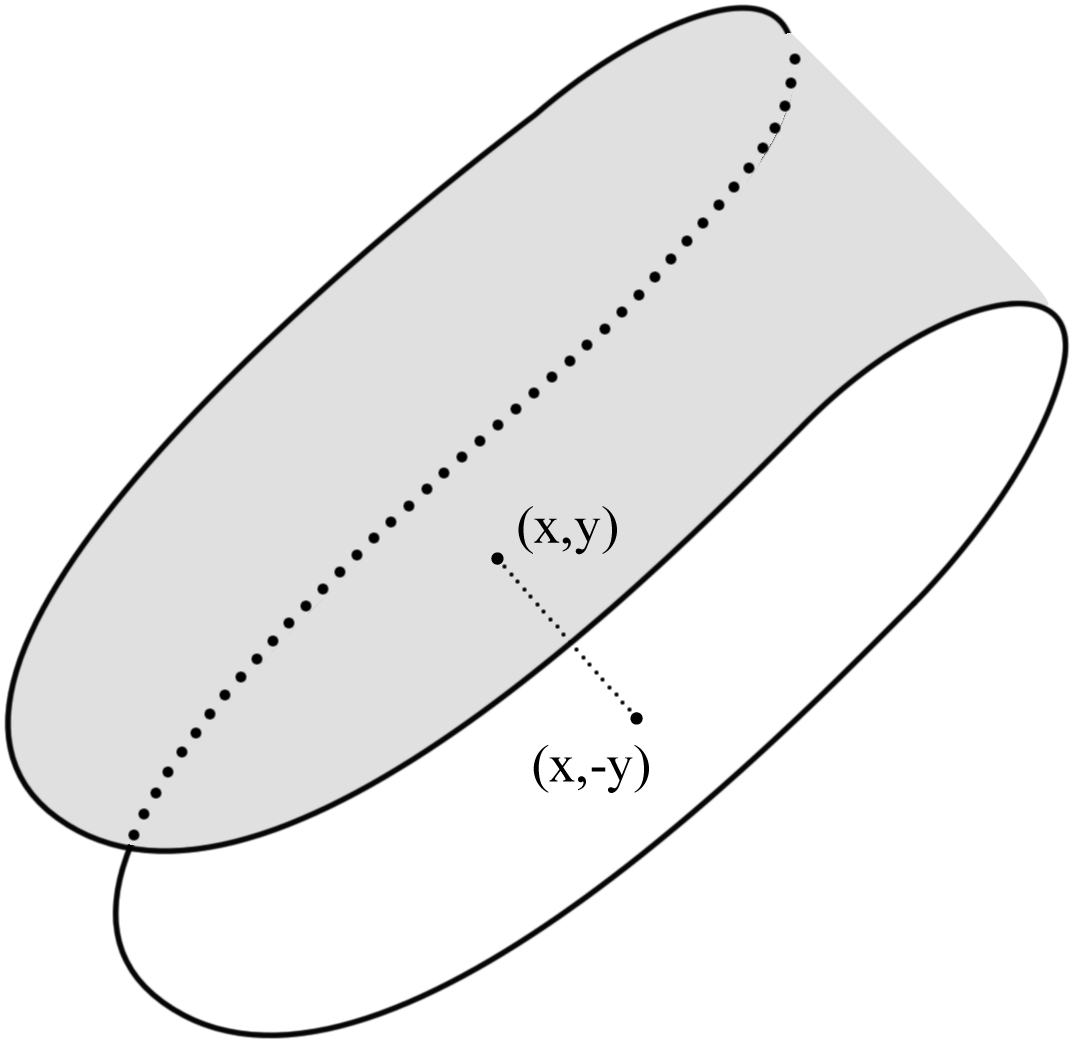}\end{center}
	\label{figplate}\caption{A section of the plate bent in half.}\end{figure}
The heat equation which determines the temperature $u$ on the plate for this situation is given by
\[\frac{\partial{u}}{\partial t}(t,x,y)=\a\left[\frac{\partial^2 u}{\partial x^2}(t,x,y)+\frac{\partial^2 u}{\partial y^2}(t,x,y)\right]+\b[u(t,x,-y)-u(t,x,y)],\]
where 
\[\frac{\partial{u}}{\partial t}(t,x,y)=\a\left[\frac{\partial^2 u}{\partial x^2}(t,x,y)+\frac{\partial^2 u}{\partial y^2}(t,x,y)\right],\]
is the usual heat equation with heat transfer coefficient $\a>0$ and the term that goes with $\b>0$ relates to the heat transfer from the corollaryresponding point in the other half of the plate due to Newton's law of cooling.\par
If we consider the associated stationary problem
\[\a\left[\frac{\partial^2 u}{\partial x^2}(x,y)+\frac{\partial^2 u}{\partial y^2}(x,y)\right]+\b[u(x,-y)-u(x,y)]=0,\]
it can be rewritten in a convenient way as
\[Lu:=\a\Delta u+\b( A^*-\Id) u=0,\]
where
\[\Delta=\frac{\partial^2 }{\partial x^2}+\frac{\partial^2 }{\partial y^2}\text{ and } A=\begin{pmatrix}1 & 0  \\ 0 & -1\end{pmatrix}.\]
If we think of a circular plate in which the boundary is constantly cooled and the surface has a constant heat source given by a function $h$, we are imposing Dirichlet boundary conditions in the ball $B$ of radius $\rho\in \bR^+$ and considering the problem
\begin{equation}\label{eqrefdisk}Lu=h,\ u|_{\partial B}=0.\end{equation}
Observe that, $\Delta$, expressed in tensor notation, is $\Delta=D_{\omega^0_2}$ where
\[\omega^0_2=\frac{1}{2}\left[(1,1)\odot(1,1)+(1,-1)\odot(1,-1)\right].\]
Besides, $A\omega^0_2=\omega^0_2$ and, thus, $\Delta A^*=A^*\Delta$. Hence, using Theorem \ref{theoremO2}, we have to take $R=-\a\Delta+\b A^*+\b\Id $ and thus
\begin{align*}RL= & -\a^2\Delta^2-\a\b A^*\Delta+\a\beta\Delta+\a\beta A^*\Delta+\b^2\Id-\b^2A^*+\a\b\Delta+\beta^2A^*-\b^2\Id \\ = & -\a^2\Delta^2+2\a\b\Delta=(-\a^2\Delta+2\a\b\Id)\Delta.\end{align*}

Now, the boundary conditions transformed by $R$ are
\[0=Ru=-\a\Delta u+\b A^*u+\b u=-\a\Delta u, \]
that is, the reduced problem becomes
\begin{equation}\label{redeq}
RLu=Rh=:f,\ u|_{\partial B}=0,\ \Delta u|_{\partial B}=0,
\end{equation}
which is equivalent to the sequence of problems
\begin{align}
\Delta u= & v,\ u|_{\partial B}=0,\label{1stprob}\\
(-\a^2\Delta+2\a\b\Id)v= & f, \ v|_{\partial B}=0.\label{2ndprob}
\end{align}

Problem \eqref{1stprob} is the well-known Poisson equation with Dirichlet conditions on the circle of radius $\rho$. The Green's function can be written in polar coordinates as
\begin{displaymath}
G_1(r,\varphi,\tilde r,\tilde \varphi)=\frac{-1}{4\pi}\ln\left[\frac{r^2\tilde r^2 -2\rho^2 r\tilde r\cos(\varphi-\tilde\varphi)+\rho^4}{ \rho^2r^2-2 \rho^2 r\tilde r\cos(\varphi-\tilde\varphi)+ \rho^2\tilde r^2}\right].
\end{displaymath}
See \cite[Section 7.2.3]{Poly}. On the other hand, problem \eqref{2ndprob} is a Helmholtz equation, and the Green's function can be described in terms of the eigenfunctions of the associated homogeneous problem (see \cite[Section 7.3.3]{Poly}). More concretely, the associated Green's function in polar coordinates is written as
\begin{align*}
& G_2(r,\varphi,\tilde r,\tilde \varphi) \\ = & \frac{1}{\alpha^2}\sum_{n=0}^{\infty}\sum_{m=1}^{\infty}\frac{1}{\left(\frac{\mu_{nm}^2}{\rho^2}+\frac{2\beta}{\alpha}\right)\|w_{nm}^{(1)}\|^2}
\left[w_{nm}^{(1)}(r,\varphi)w_{nm}^{(1)}(\tilde r,\tilde \varphi)+w_{nm}^{(2)}(r,\varphi)w_{nm}^{(2)}(\tilde r,\tilde \varphi) \right],
\end{align*}
where $\mu_{nm}$ are the positive zeroes of the Bessel functions $J_n$, the eigenfunctions are given by
\begin{displaymath}
w_{nm}^{(1)}=J_n\left(\frac{\mu_{nm}}{\rho}r\right)\cos n\varphi,\quad w_{nm}^{(2)}=J_n\left(\frac{\mu_{nm}}{\rho}r\right)\sin n\varphi,
\end{displaymath}
and 
\begin{displaymath}
\|w_{nm}^{(1)}\|^2=\frac{1}{2}\pi \rho^2 (1+\delta_{n\,0})\left[J_{n}'(\mu_{nm})\right]^2,
\end{displaymath}
where $\delta_{ij}=1$ if $i=j$ and $0$ if $i\ne j$.

Now, the Green's function associated to problem \eqref{redeq} is given by
\begin{displaymath}
G_3(r,\varphi,\tilde r,\tilde \varphi)=\int_{0}^{\rho}\int_{0}^{2\pi}G_2(r,\varphi,\hat r,\hat \varphi)G_1(\hat r,\hat\varphi,\tilde r,\tilde \varphi) \dif\hat\varphi  \dif \hat r. 
\end{displaymath}
In conclusion, the Green's function related to problem \eqref{eqrefdisk} is 
\[G_4(\eta,\xi)=R_\vdash G_3(\eta,\xi)=\int_{0}^{\rho}\int_{0}^{2\pi}R_\vdash G_2(r,\varphi,\hat r,\hat \varphi)G_1(\hat r,\hat\varphi,\tilde r,\tilde \varphi) \dif\hat\varphi  \dif \hat r,\]
where $R_\vdash$ has to be expressed in polar coordinates in order to act in the first two variables of $G_3$:
\[R=-\a\left[\frac{\partial^2}{\partial r^2}+\frac{1}{r}\frac{\partial}{\partial r}+\frac{1}{r^2}\frac{\partial^2}{\partial \varphi^2}\right]+\b A^*+\b\Id.\]
Also, it is known that $J_n'(z)=(n/z)J_n(z)-J_{n+1}(z)$, so
\begin{align*} &  R_\vdash G_2(r,\varphi,\hat r,\hat \varphi) \\ = & \frac{1}{\alpha^2}\sum_{n=0}^{\infty}\sum_{m=1}^{\infty}\frac{1}{\left(\frac{\mu_{nm}^2}{\rho^2}+\frac{2\beta}{\alpha}\right)\|w_{nm}^{(1)}\|^2}
\left[\til w_{nm}^{(1)}(r,\varphi)w_{nm}^{(1)}(\tilde r,\tilde \varphi)+\til w_{nm}^{(2)}(r,\varphi)w_{nm}^{(2)}(\tilde r,\tilde \varphi) \right],\end{align*}
where
\begin{align*}
\til w_{nm}^{(1)}  = & \left.\left(\left(\frac{\mu_{nm}}{\rho}\right)^2\left[\(\frac{n\rho}{\mu_{nm}r}\right)^2J_n-\left(1+\frac{(n+1)\rho}{\mu_{nm}r}\right)J_{n+1}+J_{n+2}\right]\right.\right.\\ & \left.\left.+\frac{n}{r}\left[\frac{\rho}{\mu_{nm}r} J_n-J_{n+1}\right] -n^2\left(\frac{\mu_{nm}}{\rho}r\right)^{-2}J_n\right)\right|_{\left(\frac{\mu_{nm}}{\rho}r\right)} \cos n\varphi, \\ \til w_{nm}^{(2)}  = & \left.\left(\left(\frac{\mu_{nm}}{\rho}\right)^2\left[\(\frac{n\rho}{\mu_{nm}r}\right)^2J_n-\left(1+\frac{(n+1)\rho}{\mu_{nm}r}\right)J_{n+1}+J_{n+2}\right]\right.\right.\\ & \left.\left.+\frac{n}{r}\left[\frac{\rho}{\mu_{nm}r} J_n-J_{n+1}\right] -n^2\left(\frac{\mu_{nm}}{\rho}r\right)^{-2}J_n\right)\right|_{\left(\frac{\mu_{nm}}{\rho}r\right)} \sin n\varphi.
\end{align*}
\begin{exa} Inspired by the previous problem, we now change the term due to Newton's law of cooling by a diffusion term in the following way.
	\[\frac{\partial{K}}{\partial t}(t,x,y)=\a\left[\frac{\partial^2 K}{\partial x^2}(t,x,y)+\frac{\partial^2 K}{\partial y^2}(t,x,y)\right]+\b\left[\frac{\partial^2 K}{\partial x^2}(t,x,-y)+\frac{\partial^2 K}{\partial y^2}(t,x,-y)\right],\]
	where $\a,\b>0$, $\b\ne\a$.\par
	If we consider the associated stationary problem
	\[\a\left[\frac{\partial^2 K}{\partial x^2}(x,y)+\frac{\partial^2 K}{\partial y^2}(x,y)\right]+\b\left[\frac{\partial^2 K}{\partial x^2}(x,-y)+\frac{\partial^2 K}{\partial y^2}(x,-y)\right]=0,\]
	it can be rewritten as
	\[LK:=\a\Delta K+\b A^*\Delta K=0,\]
Using Theorem \ref{theoremO2}, we take $R=-\a\Delta +\b A^*\Delta $ and then
	\[RL=-\a^2\Delta^2-\a\b\Delta A^*\Delta+\b\a A^*\Delta^2+\b^2(A^*\Delta)^2=\b^2\Delta^2-\a^2\Delta^2=(\b^2-\a^2)\Delta^2.\]
	Now, if we consider the fundamental solution of the bi-Laplacian $\Delta^2$ \cite[equation~(2.61)]{Gaz} we obtain a Green's function given by
	\[G_1(\eta,\xi)=\frac{1}{8\pi}\|\eta-\xi\|^2\ln\|\eta-\xi\|,\  \eta,\xi\in\bR^2.\]
	Hence, in that case, the Green's function associated to $L$ is given by
	\[G_2(\eta,\xi)=R_\vdash G_1(\eta,\xi)=(\b-\a)\frac{\ln \|\eta-\xi\|+1}{2 \pi },\ \eta,\xi\in\bR^2.\]
	If we consider the problem
	\[LK=h,\ u|_{\partial B}=0,\]
	the reduced problem becomes
	\begin{equation}
	\label{redprobc}
	(\b^2-\a^2)\Delta^2K=h,\ u|_{\partial B}=0,\ Ru|_{\partial B}=0.\end{equation}
	Now, the condition $Ru=-\a\Delta u +\b A^*\Delta u=0$ is satisfied if we can guarantee that $\Delta u=0$, so we can consider the problem
	\begin{equation}\label{redprobc2}(\b^2-\a^2)\Delta^2K=h,\ u|_{\partial B}=0,\ \Delta u|_{\partial B}=0.
	\end{equation}
	For problem \eqref{redprobc2} we have that the Green's function is given by
	\[G_3(\eta,\xi)=\frac{1}{8\pi}\|\eta-\xi\|^2\(\ln \rho-1+\ln\|\eta-\xi\|\)+\frac{\rho^2}{8\pi},\  \eta,\xi\in\bR^2.\]
	Hence, the Green's function related to problem \eqref{redprobc} is 
	\[G_4(\eta,\xi)=\frac{ \ln \rho+\ln \|\eta-\xi\|}{2 \pi }.\]
	In general, the functions
	\[G_5(\eta,\xi)=\frac{1}{8\pi}\|\eta-\xi\|^2\( \mu+\ln\|\eta-\xi\|\)+\frac{\nu}{8\pi},\  \eta,\xi\in\bR^2,\]
	with $\mu,\,\nu\in\bR$, are Green's functions related to the operator $\Delta^2$ with different boundary conditions. The associated function for the operator $L$ is given by
	\[G_6(\eta,\xi)=\frac{1+ \mu+\log \|\eta-\xi\|}{2 \pi }.\]
	\end{exa}


\section*{Acknowledgements}
	The authors would like to recognize their gratitude towards the developers of the \textit{Mathematica} \textbf{NCAlgebra} software \cite{NCAlgebra}, which allowed us to check the results of this paper computationally and to Professor Santiago Codesido for providing useful insight about the model presented in this paper. Also, Adri\'an Tojo would like to acknowledge his gratitude towards the Department of Applied Mathematics of the University of Granada where this work was written, and specially towards the coauthor
	of this paper, for his affectionate reception, this and other times, and his invaluable work and advice.

\end{document}